\titleformat*{\section}{\normalsize\bfseries}
\newtheorem{thm}{Theorem}
\newtheorem{lem}[thm]{Lemma}
\newtheorem{prop}[thm]{Proposition}
\newtheorem{cor}[thm]{Corollary}
\newtheorem{dfn}[thm]{Definition}
\newtheorem{conj}[thm]{Conjecture}
\newtheorem{ques}[thm]{Question}
\newtheorem{rem}[thm]{Remark}
\theoremstyle{nonumberplain}
\newcommand{\C}{\mathbb C}
\newcommand{\R}{\mathbb R}
\newcommand{\Q}{\mathbb Q}
\newcommand{\Z}{\mathbb Z}
\newcommand{\F}{\mathbb F}
\def\l{{\ell}}
\def\O{{\cal O}}
\def\m{{\mathfrak m}}
\def\n{{\mathfrak n}}
\def\p{{\mathfrak p}}
\def\GK{{\text{Gal}(\overline{K} / K)}}
\title{Examples of genuine QM abelian surfaces which are modular}
\date{}
\author{Ciaran Schembri}
\begin{document}
\bibliographystyle{plain}

\maketitle

\begin{abstract}Let $K$ be an imaginary quadratic field. Modular forms for GL(2) over $K$ are known as Bianchi modular forms. Standard modularity conjectures assert that every weight 2 rational Bianchi newform has either an associated elliptic curve over $K$ or an associated abelian surface with quaternionic multiplication over $K$. We give explicit evidence in the way of examples to support this conjecture in the latter case. Furthermore, the quaternionic surfaces given correspond to \textit{genuine} Bianchi newforms, which answers a question posed by J. Cremona as to whether this phenomenon can happen.
\end{abstract}

\section{Introduction}

Let $K$ be an imaginary quadratic field. A simple abelian surface over $K$ whose algebra of $K$-endomorphisms is an indefinite quaternion algebra over $\Q$ is commonly known as a \textit{QM abelian surface}, or just \textit{QM surface}. These are also often referred to \textit{false elliptic curves}, coined by J.-P. Serre  in the 1970s \cite{Adimoolam77} based on the observation that such a surface is isogenous to the square of an elliptic curve modulo every prime of good reduction \cite[Lemma 6]{Yoshida73}. 

It is well-known that one can obtain QM surfaces over $K$ by base changing suitable abelian surfaces over $\Q$. Accordingly, let us call a QM surface over $K$ \textit{genuine} if it is not the twist of base-change to $K$ of an abelian surface over $\Q$. Motivated by the conjectural connections with Bianchi modular forms, in 1992 J. Cremona asked whether genuine QM surfaces over imaginary quadratic fields should exist (see Question \ref{ques:Cremona}). We answer this question in the positive by providing explicit genus 2 curves whose Jacobians are genuine QM surfaces. To the best of our knowledge these are the first such examples in the literature. Furthermore, by carrying out a detailed analysis of the associated Galois representations and applying the Faltings-Serre-Livn\'{e} criterion, we prove the modularity of these QM surfaces. The main result of the present article is as follows:

\begin{thm}\label{examples_thm} The Jacobians of the following genus 2 curves are QM surfaces which are modular by a genuine Bianchi newform as in Conjecture \ref{modularity}. 
	\begin{enumerate}
		\item $C_1: y^2= x^6 + 4ix^5 + (-2i - 6)x^4 + (-i + 7)x^3 + (8i - 9)x^2 - 10ix + 4i + 3, \\  \text{Bianchi newform: }$  \href{http://www.lmfdb.org/ModularForm/GL2/ImaginaryQuadratic/2.0.4.1/34225.3/a/}{2.0.4.1-34225.3-a;} 
		\item $C_2 : y^2=x^6 + (-2\sqrt{-3} - 10)x^5 + (10\sqrt{-3} + 30)x^4 + (-8\sqrt{-3} - 32)x^3 
		\\+ (-4\sqrt{-3} + 16)x^2 +     (-16\sqrt{-3} - 12)x - 4\sqrt{-3} + 16, \\  \text{Bianchi newform: }$ \href{http://lmfdb.org/ModularForm/GL2/ImaginaryQuadratic/2.0.3.1/61009.1/a/}{2.0.3.1-61009.1-a;}
		\item $C_3: y^2=(104\sqrt{-3} - 75)x^6 + (528\sqrt{-3} + 456)x^4 + (500\sqrt{-3} + 1044)x^3\\ + (-1038\sqrt{-3} + 2706)x^2 + (-1158\sqrt{-3} + 342)x - 612\sqrt{-3} - 1800, \\  \text{Bianchi newform: }$  \href{http://www.lmfdb.org/ModularForm/GL2/ImaginaryQuadratic/2.0.3.1/67081.3/a/}{2.0.3.1-67081.3-a;} 
		\item $C_4 : y^2 = x^6 - 2\sqrt{-3}x^5 + (2\sqrt{-3} - 3)x^4 + 1/3(-2\sqrt{-3} + 54)x^3\\ + (-20\sqrt{-3} + 3)x^2 + (-8\sqrt{-3} - 30)x + 4\sqrt{-3} - 11, \\  \text{Bianchi newform: }$ \href{http://www.lmfdb.org/ModularForm/GL2/ImaginaryQuadratic/2.0.3.1/123201.1/b/}{2.0.3.1-123201.1-b.}
	\end{enumerate}
\end{thm}

\begin{proof} See \S \ref{Faltings Serre}. \qed
\end{proof}

Over the rationals there is a celebrated result that establishes a connection between elliptic curves over $\Q$ and classical newforms of weight 2.  Extending this to number fields is an important aspect of the Langlands programme. However, in the case when the number field is totally complex the correspondence needs to be modified to include QM surfaces. This was first observed by P. Deligne in a letter to J. Mennicke in 1979 \cite{Deligne79Letter} for imaginary quadratic fields. The construction is detailed in \cite{EGM80} and is illustrated with an explicit example.

Thus the details of modularity for QM surfaces are different from the case of GL$_2$-type. Let us be completely explicit with the following conjecture \cite{Clo90,Taylor95}.

\begin{conj}\label{modularity} Let $K$ be an imaginary quadratic field.
	\begin{enumerate} 
		\item Let $f$ be a Bianchi newform over $K$ of weight $2$ and  level $\Gamma_0(\mathfrak{n})$ with rational Hecke eigenvalues. Then there is either an elliptic curve $E/K$ without CM by $K$ of conductor $\mathfrak{n}$ such that $L(E/K,s)=L(f,s)$ or there is a QM surface $A/K$ of conductor $\mathfrak{n}^2$ such that $L(A/K,s)= L(f,s)^2$. 
		
		\item Conversely, if $E/K$ is an elliptic curve without CM by $K$ of conductor $\mathfrak{n}$ then there is an $f$ as above such that $L(E,s)=L(f,s)$. Moreover, if $A/K$ is a QM surface of conductor $\mathfrak{n}^2$ then there is an $f$ as above such that $L(A,s)= L(f,s)^2$.
	\end{enumerate}
\end{conj}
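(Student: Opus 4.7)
The plan is to handle each $C_i$ ($i = 1,\ldots,4$) by three largely independent steps: (a) verify that $A_i := \mathrm{Jac}(C_i)$ carries an action of a maximal order $\mathcal{O}_D$ in an indefinite rational quaternion algebra of the predicted discriminant, so that $A_i$ is a QM surface; (b) invoke the Faltings--Serre--Livn\'e method to identify the associated $2$-dimensional $\lambda$-adic Galois representation with the one attached to the Bianchi newform $f_i$ listed in the theorem; (c) confirm that $f_i$ is genuine in the sense used in the introduction, i.e.\ not a twist of a base-change from $\Q$.

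For (a), I would compute the period matrix of $C_i$ numerically to very high precision and, via the standard LLL-based reconstruction of endomorphisms, locate a candidate cycle class generating the QM. The conductor $\mathfrak{n}^2$ read off from the Bianchi newform $f_i$, together with the expected relation $\mathrm{disc}(\mathcal{O}_D)^2 \cdot \mathrm{cond}^2 = \mathrm{cond}(A_i)$, pins down the discriminant $D$ to try. I would then certify the QM algebraically by exhibiting an explicit correspondence on $C_i \times C_i$ (or an explicit element of $\mathrm{End}(A_i)$ verified to satisfy the correct quadratic relation), using Costa--Mascot--Sijsling--Voight-style algorithms. As a cross-check, matching the bad primes, the Sato--Tate group, and the real and imaginary period ratios against the Shimura curve $X_0^D$ (or its Atkin--Lehner quotient) gives a further consistency test.

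For (b), the QM endomorphisms split $V_\ell(A_i) \otimes \Q_\lambda$ into two copies of a $2$-dimensional $\GK$-representation $\rho_{A_i,\lambda}$, and to each $f_i$ there is (conjecturally, but established in the cases at hand via the existing constructions for rational Bianchi newforms) a $2$-dimensional $\lambda$-adic representation $\rho_{f_i,\lambda}$. I would choose $\lambda \mid 2$ and apply Livn\'e's variant of Faltings--Serre: first show that both residual representations are reducible or share the same small absolutely irreducible image, using the explicit $2$-torsion field of $A_i$ on one side and the mod-$2$ Hecke data of $f_i$ on the other; then determine the finite ramification set $S$ (the primes of bad reduction of $A_i$ together with $2$); and finally verify, for every prime $\mathfrak{p}$ in a suitable Livn\'e test set $T$ (a set of primes of $K$ whose Frobenius classes generate the relevant quotient of $\mathrm{Gal}(K_S/K)$), that $\mathrm{tr}\,\rho_{A_i,\lambda}(\mathrm{Frob}_\mathfrak{p}) = a_\mathfrak{p}(f_i)$. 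The left-hand side is computed from $\#C_i(\F_\mathfrak{p})$, and the right-hand side is read from the LMFDB entry for $f_i$.

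For (c), I would rule out the possibility that $f_i$ is a twist of a base-change from $\Q$ by comparing Hecke eigenvalues of $f_i$ with those of all classical newforms of the appropriate level, and by verifying that the constraint $a_\mathfrak{p}(f_i) = \overline{a_{\mathfrak{p}^\sigma}(f_i)}$ fails for some split prime $\mathfrak{p}$; alternatively, the genuineness flag is already recorded by the LMFDB for these forms, and one cites it directly.

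The main obstacle is step (b): the residual analysis and the construction of an effective Livn\'e test set $T$. One must ensure the mod-$\lambda$ images on both sides are pinned down correctly (distinguishing reducible from dihedral residual cases matters for which test-set recipe applies), control the image of inertia at the primes above~$2$ despite potentially bad reduction of $A_i$ there, and keep $T$ small enough that all the Frobenius traces can be computed in reasonable time for each of the four curves. Everything else --- the QM verification and the genuineness check --- reduces to finite, well-understood computations once the target invariants are identified.
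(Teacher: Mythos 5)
There is a genuine gap here, and it is structural rather than technical: the statement you were asked to prove is Conjecture~\ref{modularity}, which is a universal statement quantified over \emph{every} imaginary quadratic field $K$, \emph{every} rational weight~2 Bianchi newform $f$ of level $\Gamma_0(\mathfrak{n})$ (part~1), and \emph{every} non-CM elliptic curve and every QM surface over $K$ (part~2). The paper does not prove this statement at all --- it is stated as a conjecture, attributed to Clozel and Taylor, and left as such; what the paper actually proves is Theorem~\ref{examples_thm}, namely that four explicit genus~2 curves have Jacobians which are genuine QM surfaces modular by specific Bianchi newforms, as \emph{evidence} for the conjecture. Your proposal is a plan for proving that theorem (QM certification via Costa--Mascot--Sijsling--Voight-type algorithms, identification of the $2$-dimensional $\lambda$-adic representations via Faltings--Serre--Livn\'e on a computed test set, and a genuineness check against classical newforms), and as such it tracks the paper's actual argument for Theorem~\ref{examples_thm} quite closely. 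But no finite computation of this kind can establish the conjecture: verifying finitely many instances does not address part~1's requirement that \emph{every} rational Bianchi newform admit an associated elliptic curve or QM surface of the exact conductor $\mathfrak{n}$ or $\mathfrak{n}^2$, nor part~2's converse that every non-CM elliptic curve and every QM surface over $K$ be modular. Both directions remain open in general and are far beyond the reach of the Faltings--Serre--Livn\'e method, which only compares two representations that are already given.

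If the intended task was in fact Theorem~\ref{examples_thm}, then your outline is essentially the paper's route, with two small caveats: the paper pins down the residual mod-$2$ situation using the structure forced by the quaternionic multiplication at a ramified prime (the image of $\overline{\rho}_{f,2}$ lands in a cyclic, hence absolutely reducible, subgroup --- this is why Livn\'e's criterion, rather than the irreducible Faltings--Serre variant, applies after restricting to the cubic extension cut out by the residual representation), and one must also rule out a final quadratic-character twist after that restriction, which the paper does by checking a prime inert in the cubic extension. Your proposal gestures at the residual dichotomy but does not isolate this cyclic-image phenomenon at primes dividing the discriminant of the acting quaternion algebra, which is the point that makes the method work here. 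None of this, however, repairs the central problem: what you have sketched supports the conjecture but does not and cannot prove it.
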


$$\begin{tikzcd}[column sep=1cm]
& \Bigg\{ \substack{\text{weight 2 rational} \\ \text{Bianchi newforms}/K}   \Bigg\}  \arrow[dashed,leftrightarrow]{r}{1:1}
&  \substack{ \Bigg\{ \substack{\text{non-CM by $K$} \\ \text{elliptic curves}/K \\ \text{up to isogeny}}   \Bigg\}  \\ \text{\huge{$\sqcup$}} \\ \Bigg\{ \substack{\text{QM surfaces}/K \\ \text{up to isogeny}}   \Bigg\}  } 
\end{tikzcd}$$

Let $f$ be a classical newform of weight 2 with a real quadratic Hecke eigenvalue field $K_f= \Q(\{ a_i\})$ and denote $<\sigma>=\text{Gal}(K_f / \Q)$. We say that $f$  has an \emph{inner twist} if $f^\sigma = f \otimes \chi_K$ where $\chi_K$ is the quadratic Dirichlet character associated to some imaginary quadratic field $K$. It follows that $f$ and $f^\sigma$ must base change to the same Bianchi newform over $K$. The term \emph{genuine} is used for newforms that are not (the twist of) base-change of a classical newform. For more background on Bianchi newforms see \cite[\S 2]{CW94}.

From a geometric point of view, let $A/\Q$ be the abelian surface of GL$_2$-type corresponding to the newforms $f, f^\sigma$ with an inner twist  via $L(A/\Q,s)=L(f,s)L(f^\sigma,s)$.  If the base-change surface $A \otimes_{\Q} K$ remains simple then it is necessarily a QM surface and $L(A/K,s) = L(F,s)^2$, where $F$ is the induction from $\Q$ to $K$ of $f$. This motivates the following question (see \cite[Question 1']{Cremona92} and also \cite[Conjecture 1]{DGP10}).

\begin{ques}\label{ques:Cremona}
	If $f$ is a rational weight $2$ Bianchi newform over $K$ which is \textbf{genuine}, does $f$ have an associated elliptic curve over $K$?
\end{ques}

Given the modular correspondence above, we could rephrase this question to ask whether all QM surfaces arise from a GL$_2$-type surface over $\Q$. The genus 2 curves given in Theorem \ref{examples_thm} answer this question and say that such a newform $f$ does not necessarily have to correspond to an elliptic curve. 

The genuine QM surfaces we present also have an interesting connection to the Paramodularity Conjecture. Recall that the Paramodularity Conjecture posits a correspondence between abelian surfaces $A/\Q$ with $\text{End}_\Q(A) \otimes \Q \simeq \Q$ and genus 2 paramodular rational Siegel newforms of weight 2 that are not Gritsenko lifts \cite[Conjecture 1.1]{BK14modified}. It has been recently pointed out by F. Calegari et al. \cite[\S10]{BCGP18} that the conjectural correspondence needs to include abelian 4-folds $B/\Q$ with $\text{End}_\Q(B) \otimes \Q$ an indefinite non-split quaternion algebra over $\Q$ (see the amended version in \cite[\S8]{BK14modified}) . This can be illustrated using our genuine QM surfaces.

Let $C/K$ be any of the four curves given in Theorem \ref{examples_thm}. Define $A/K$ to be the QM surface given by taking the Jacobian of $C/K$ with \(\text{End}_K(A) \otimes \Q \simeq D/\Q\) an indefinite non-split quaternion algebra. Then the Weil restriction $B=\text{Res}_{K/\Q}(A)$ of $A$ from $K$ to $\Q$ is a simple abelian 4-fold such that $\text{End}_\Q(B) \otimes \Q \simeq D/\Q$. We prove that there is a genuine rational weight 2 Bianchi newform $f$ over $K$ such that $L(A/K,s)=L(f,s)^2$. Now let $F$ be the genus 2 paramodular rational Siegel newform of weight 2 that is the theta lift of $f$. It now follows from the properties of Weil restriction \cite{Milne72} and theta lifting \cite{BDPS15} that $L(B/\Q,s)=L(A/K,s)=L(f,s)^2=L(F,s)^2$.

\begin{comment}
$$\begin{tikzcd}
& B \arrow[dashed,leftrightarrow]{r}
&  F   \\ 
& A \arrow{u}{Res_{K/\Q}} \arrow[dashed,leftrightarrow]{r}
& f \arrow{u}{\Theta}
\end{tikzcd}$$
\end{comment}

In analogy to the case of QM surfaces, at any prime $p$ unramified in $D$ the 8-dimensional $p$-adic Tate module of $B/\Q$ splits as the square of a 4-dimensional submodule\cite[\S7]{Chi92}. Then the 4-dimensional $p$-adic Galois representation has similar arithmetic to one that arises from an abelian surface over $\Q$ with trivial endomorphisms. Indeed, our example above shows that via the representation afforded by the submodule, $B/\Q$ corresponds to a Siegel newform of the type considered in the Paramodularity Conjecture.

The article will be laid out as following: in \S\ref{surfaces section} we outline how these genus 2 curves were found and in \S \ref{rep section} we discuss some arithmetic properties of the attached Galois representation in the case where $\ell$ divides the discriminant of the acting quaternion algebra. Then \S\ref{Faltings Serre} will be dedicated to showing how the Faltings-Serre-Livn\'{e} criterion can be applied in order to prove that the examples given are modular. The final section lists the examples and contains further details of interest about them.\\

\section{Rational points on Shimura curves}\label{surfaces section}

%\todo[inline]{Short section on Shimura curves and Baba-Granath family. Method for finding curves. Discussion about existence of false elliptic curves; distiniction between bc \& genuine points. Coleman's conjecture. Conductor.}

In this section we outline how the genus 2 curves in Theorem \ref{examples_thm} were found. Let $A$ be a geometrically simple abelian surface defined over an imaginary quadratic field $K$.
Define the endomorphism rings End$_K(A)$ and End$_{\overline{K}}(A)$ to be the endomorphisms of $A$ which are defined over $K$ and $\overline{K}$ respectively. As in the introduction, we use the convention that $A$ has \textit{quaternionic multiplication}, or \textit{QM} for short, if End$_{{K}}(A)$ is an order $\O$ in an indefinite quaternion algebra $B_D$ over $\Q$. We say that $A$ has \textit{potential QM} if the action of $\O$ is defined over some extension of $K$. The notation $B_D$ is used for the unique quaternion algebra of discriminant $D$ up to isomorphism. Note that $B_D$ must be non-split because $A$ is simple. For the remainder of the article let $\O$ be a maximal order of $B_D$.

Families of QM surfaces have been constructed by K. Hashimoto et al. (see \cite{HT99}) for quaternion algebras of discriminant 6 and 10. Testing numerically  it would seem these give rise to surfaces which are all (a twist of) base-change. So we instead utilise two families given by S. Baba and H. Granath \cite{BG08} that have been derived from the moduli space.

Given the order $\O$, the set of norm 1 elements is denoted by $\O^1$. These act as isometries on the upper half plane $\mathcal{H}_2$ via an embedding $\O \hookrightarrow M_2(\R)$ and the resulting quotient $X_D = \mathcal{H}_2 / \O^1$ is a moduli space for abelian surfaces with quaternionic multiplication by $\O$ \cite{BG08}. It is well known that these are compact Riemann surfaces called Shimura curves and they admit a model defined over $\Q$. In particular, these are $X_6: X^2+ 3Y^2 + Z^2 = 0$ and $X_{10}: X^2+ 2Y^2 + Z^2 = 0$.

Let us detail the family for $D=6$, for more information see \cite{BG08}. We take the point on the conic 
$$P_j=(4: 3\sqrt{j} : \sqrt{-27j-16}) \in X_6$$
and define the genus 2 curve
\begin{align*}
C_j: y^2 &= (-4+3s)x^6 + 6tx^5 + 3t(28+9s)x^4 - 4t^2x^3\\ 
&+ 3t^2(28-9s)x^2 + 6t^3x - t^3(4+3s),
 \end{align*}
where $t=-2(27j+16)$ and $s=\sqrt{-6j}$. Then the Jacobian of $C_j$ is a QM surface. The curve is defined over the field $\Q(\sqrt{j},\sqrt{-6})$ and the field of moduli for $C_j$ is $\Q(j)$. In this way we can generate numerous QM surfaces by, for example, taking any $j \in \Q$.

For the purposes of modularity we need to fix a field $K=\Q(\sqrt{-\delta})$ and define a QM surface over $K$. To do this we first establish whether $X_6(K)$ is non-empty and if this is the case then find a $K$-rational point $(a:b:c)$ on $X_6$. Take the quantity $j=(\frac{4b}{3a})^2 \in K$ and the corresponding genus 2 curve $C_j$ is defined over $K(\sqrt{-6})$. It has a model defined over $K$ if and only if $K$ splits the Mestre obstruction which is the quaternion algebra
$$\Bigl( \frac{-6j, -2(27j+16)}{\Q(j)} \Bigr) \simeq \Bigl( \frac{-6, 2}{K} \Bigr) \simeq B_D \otimes_\Q K  .$$
Hence $C_j$ has a model defined over $K$ if and only if $K$ splits $B_D$. Since $D=6$ this happens exactly when neither of 2 or 3 split in $K$.

So let us suppose that $K \hookrightarrow B_D$. Once we have the curve $C_j$ defined over $K(\sqrt{-6})$ we wish to find an isomorphic curve defined over $K$. Using MAGMA it is possible to take Igusa-Clebsch invariants and then create a model defined over $K$ with the same Igusa-Clebsch invariants. This then allows us to test whether the curve is a twist of base-change. It can be easily shown that a curve is genuine if the Euler polynomials at a pair of conjugate primes are not the same (up to twists).  If it is indeed genuine then we endeavour to find a smaller model for the curve.

The size of the level places a limitation on whether a Bianchi modular form can be computed. Hence we try to find surfaces with as small a conductor as possible. With QM surfaces it can be a challenge to find examples with small conductor (see \cite[Section 8]{BK14modified}).

It is necessary to know the conductor exactly since we wish to find the conjecturally associated Bianchi newform.  The odd part of the conductor can be found using MAGMA. Computing the even part has recently been made possible using machinery developed in \cite{DD17}. The support of the ideal generated by the discriminant of a genus 2 hyperelliptic curve contains the support of the conductor of its Jacobian and the inclusion can in fact be strict. This phenomenon arises especially when one works with curves that have very large coefficients. 

The curves in Theorem \ref{examples_thm} were then found by parameterising the conic $X_6(K)$ and conducting a large search with varying $j$-value. To control the support of the conductor and stay away from large primes we used the proposition below. Once suitable curves were discovered, minimal models were found using as yet unpublished code by L. Demb\'{e}l\'{e}.

\begin{prop}
	Let $C_j/k$ be a genus 2 curve as above. Then $C_j$ has potentially good reduction at a prime $\p \nmid 6$ if and only if $\nu_\p(j)=0$.
\end{prop}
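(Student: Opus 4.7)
My plan is to apply the standard criterion characterising potentially good reduction of a genus 2 curve in terms of its Igusa-Clebsch invariants, and to verify the resulting condition by direct computation on the Baba-Granath family $C_j$. Concretely, I would invoke the theorem (essentially due to Igusa, with refinements in residue characteristic $2$ and $3$ by Q. Liu): a genus $2$ curve $C/k$ has potentially good reduction at a prime $\p$ if and only if the absolute Igusa invariants, for instance $j_1 = I_2^5/I_{10}$, $j_2 = I_2^3 I_4/I_{10}$ and $j_3 = I_2^2 I_6/I_{10}$, are all integral at $\p$; here $I_{2k}$ are the Igusa-Clebsch invariants and $I_{10}$ is proportional to the discriminant of any defining sextic.

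The first step is to substitute the coefficients of the sextic defining $C_j$, expressed in $s = \sqrt{-6j}$ and $t = -2(27j+16)$, into the classical formulas for $I_2, I_4, I_6, I_{10}$. Since each $I_{2k}$ is homogeneous of even weight $2k$ in the coefficients of the sextic, every surviving monomial has even total $s$-degree; substituting $s^2=-6j$ then expresses every $I_{2k}(C_j)$ as a polynomial in $j$ with coefficients in $\Z[1/6]$.

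The second step is to factor these polynomials in $j$, which I would carry out using a computer algebra system. I expect that, up to a unit supported on the primes above $6$, one has $I_{10}(C_j) = j^a(27j+16)^b$ and that $I_2(C_j), I_4(C_j), I_6(C_j)$ are monomials in $(27j+16)$, with exponents matched in such a way that when one forms the absolute invariants $j_1, j_2, j_3$, the factor $(27j+16)$ cancels uniformly, leaving Laurent monomials in $j$ of mixed sign, say $j_i = c_i \cdot j^{m_i}$ with $c_i$ a $\p$-unit for every $\p \nmid 6$. The Liu-Igusa criterion then yields integrality of all three $j_i$ at $\p \nmid 6$ if and only if $\nu_\p(j)=0$, which is the desired equivalence.

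The principal obstacle is the explicit factorisation step, and in particular verifying that $(27j+16)$ really does cancel from every absolute invariant (so that neither $j = -16/27$ nor any other accidental zero of the discriminant polynomial spoils the clean characterisation); this is a sizeable but routine symbolic computation. A more conceptual route would exploit the moduli interpretation used throughout the section: $j$ is (the square of) a rational coordinate on the Shimura curve $X_6$, which admits a smooth proper integral model at every $\p \nmid 6$, so that the specialisation of the moduli point $P_j$ is controlled by $\nu_\p(j)$, and the explicit Baba-Granath model $C_j$ degenerates precisely over the zero and the pole of this coordinate.
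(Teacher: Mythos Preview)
The paper does not give its own argument here: it simply cites \cite[Proposition 3.19]{BG08}. Your proposal---compute the Igusa--Clebsch invariants of the Baba--Granath sextic as explicit rational functions of $j$, then apply the Igusa--Liu criterion for potentially good reduction---is exactly the method Baba and Granath use to prove that proposition, so in substance you are reconstructing the cited proof rather than offering an alternative. Your second, moduli-theoretic sketch (properness of the integral model of $X_6$ away from $6$, with degeneration controlled by the coordinate $j$) is also close in spirit to how they think about it.

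One caution on the details of your first route: the three quantities $I_2^5/I_{10}$, $I_2^3 I_4/I_{10}$, $I_2^2 I_6/I_{10}$ do not by themselves detect potentially good reduction, since they all vanish when $I_2$ does. The clean criterion is that the weighted projective point $[I_2:I_4:I_6:I_{10}] \in \mathbb{P}(1,2,3,4,5)$ specialises with $I_{10}$ a unit, which in practice means you should track a full generating set of absolute invariants (for instance including $I_4^5/I_{10}^2$ and $I_6^5/I_{10}^3$) before drawing the ``mixed-sign exponent'' conclusion. With that adjustment the computational plan goes through, and the explicit invariants for this family are written out in \cite{BG08}.
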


\begin{proof}
See \cite[Proposition 3.19]{BG08}.
\end{proof}

\section{Galois representations attached to QM surfaces}\label{rep section}

%\todo[inline]{Give statement and proof of Wilson-style short exact sequence. Discussion of how this helps deduce the quaternion algebra if trace of Frobenius is known. \href{http://wwwf.imperial.ac.uk/~buzzard/maths/research/notes/inner_twists.pdf}{Reducible image}, Mumford-Tate group?}

In this section we describe the image of the Galois representation attached to a QM surface when the prime $\ell$ divides the discriminant of the quaternion algebra. For a brief overview on the arithmetic of quaternion algebras see \cite[Ch. 2]{MR03}.

Let $A/K$ be a QM surface with $\O \hookrightarrow \text{End}_K(A)$ a maximal order in the quaternion algebra $B/\Q$ and denote by $\sigma_{\ell} :  G_K \rightarrow GL_4 (\Z_\ell)$ the representation coming from the $\ell$-adic Tate module $T_\ell A= \displaystyle\lim_{\leftarrow n} \ A[\ell^n]$. 

Denote by $\O_\ell = \O \otimes_\Z \Z_\ell$ and $B_\ell = B \otimes_\Q \Q_\ell$. Then for each prime $\ell$ the Tate module $T_\l A$ is free of rank 1 over $\O_\l$ \cite{Ohta74} and hence there is an associated $\ell$-adic representation 
$$\rho_{\ell} : \ G_K \longrightarrow \text{Aut}_{\O}(T_\ell A) \simeq \O_\ell^\times \subseteq B_\ell^\times.$$
Furthermore, the $\rho_{\ell} $ form a strictly compatible system of $\ell$-adic representations \cite[\S 5]{Jordan86}.

If $\ell \nmid \text{Disc}(B)$  this precisely means that $(\O \otimes \Z_\ell)^{\times} \simeq GL_2(\Z_\ell)$ and in this case there is a decomposition \cite[Theorem 7.1]{Chi92}
$$\sigma_{\ell} \simeq  \rho_{\ell} \oplus \rho_{\ell}.$$

For the remainder of the section let $\ell$ be a prime that divides $\text{Disc}(B)$. This means that $\ell$ is ramified in $B$ and so $B_\ell$ is isomorphic to the unique division quaternion algebra over $\Q_\ell$. It can be represented as
$$\Big( \frac{ \pi, u}{\Q_\ell} \Big) \simeq \Q_\ell \cdot 1 + \Q_\ell \cdot i + \Q_\ell \cdot j + \Q_\ell \cdot ij; \ i^2=u, \ j^2=\pi;$$
where $\pi$ is the uniformiser of $\Z_\ell$ and $\Q_\ell({\sqrt{u}})$ is the unique unramified quadratic extension of $\Q_\ell$. 

Any quadratic extension of $\Q_\ell$ splits the ramified quaternion algebra. So let us denote $L=\Q_\ell({\sqrt{u}})$ and $R_L$ as its ring of integers. Then $B \otimes_{\Q_\ell} L \simeq M_2(L)$ and there is an explicit isomorphism of $\Q_\ell$-algebras
\begin{align}B_\ell \simeq  \Bigl\{ \Bigl( \begin{matrix}  \alpha & \beta \\ \pi \beta' & \alpha' \end{matrix} \Bigr)  &| \  \alpha, \beta \in L, \ ':L\rightarrow L \text{ is conjugation in } L/\Q_\l  \Bigr\}  \subseteq M_2(L); \\
&i \mapsto  \Bigl( \begin{matrix}  \sqrt{u} & 0 \\ 0 & -\sqrt{u} \end{matrix} \Bigr),  \ 
\ j \mapsto \Bigl( \begin{matrix}  0 & 1 \\ \pi & 0 \end{matrix} \Bigr).
\end{align}

If $\nu: \Q_\l \rightarrow \Z$ is the $\l$-adic valuation then $w=\nu \circ N_{B_\l / \Q_\l}$ defines a valuation on the quaternion algebra $B_\l$. This gives us 
$$\O_\l = \{ \ x \in B_\l \ | \ w(x) \geq 0 \ \}$$
which is the unique maximal order of $B_\l$ and 
$$\mathcal{J} = \{ \ x \in B_\l \ | \ w(x) > 0 \ \},$$
a two-sided ideal. It is a principal ideal given by $\mathcal{J}= \O_\l j$ and any two-sided ideal of $\O_\l$ is a power of $\mathcal{J}$. In the isomorphism (1) we get $\O_\l$ by taking $\alpha, \beta \in R_L$.

Define $\lambda_\ell \subseteq \O$ to be the unique two-sided ideal of reduced norm $\ell$ such that $\lambda_\ell^2 =(\ell)$. The torsion subgroups $A[\ell]$ and $A[\lambda_\ell]$ are free of rank 1 over the $\F_\ell$-algebras $\O/\ell $ and $\O/\lambda_\ell$ respectively \cite[\S4]{Jordan86}. Explicitly, these have the structure
\begin{align*}
&\O/\ell \simeq  \bigl\{ \bigl( \begin{smallmatrix}  \alpha & \beta \\ 0 & \alpha^\l \end{smallmatrix} \bigr)  | \  \alpha, \beta \in \F_{\ell^2}  \bigr\}  \subseteq M_2(\F_{\ell^2}), \\
&\O/\lambda_\ell \simeq \F_{\ell^2}.
\end{align*}

Denote the residual representations by
\begin{align*}
\overline{\tau}_{\ell} :  \ &\GK \longrightarrow \text{Aut}_{\O}(A[\ell]) \leq \text{GL}_2( \F_{\ell^2}),  \\
\overline{\rho}_{\ell} : \  &\GK \longrightarrow \text{Aut}_{\O}(A[\lambda_\ell]) \simeq \F_{\ell^2}^\times.
\end{align*}
There is a commutative diagram:
$$\begin{tikzcd}
& G_K \arrow{rd}{\overline{\rho}_{\l}} \arrow{r}{\rho_\l} 
&  \arrow{d}  \text{Aut}_\O(T_\l A)  \\ 
&
& \text{Aut}_\O(A[\lambda_\l]).
\end{tikzcd}$$

Under the identification (1) and projecting as in the commutative diagram the image of $\overline{\rho}_{\ell}$ will lie in GL$_2(\F_{\l^2})$. Furthermore, up to conjugation it can be assumed that the image of $\overline{\rho}_{\ell}$ is contained in GL$_2(\F_\l)$ by \cite[Lemma 3.1]{Jones16}. Specifically, it will be contained in the non-split Cartan subgroup of GL$_2(\F_\l)$, which is the unique cyclic subgroup of order $\ell^2-1$.

\begin{comment}
 This can be seen by the fact that $\F_{\l^2}$ is a 2-dimensional $\F_\l$ vector space and there is a natural map $\F_{\l^2}^\times \rightarrow \text{Aut}_{\F_\l}(\F_{\l^2})$ given by left multiplication.
\end{comment}

For a Frobenius element $F_\upsilon$ the Hecke polynomial is given by 
$$P_{\rho_{\ell}}(F_\upsilon) = N_{B_\ell/\Q_\l} ( 1- \rho_{\ell}(F_\upsilon )t) = 1 - a_\upsilon t + N_\upsilon t^2,$$
which residually is 
$$ P_{\rho_{\ell}}(F_\upsilon) \text{ mod } \l = (1 - \alpha t)( 1- \alpha^\l t)$$
with $\alpha \in \F_{\l^2}^\times.$ Furthermore, the map $F_\upsilon \mapsto N_{\F_{\l^2} / \F_\l} (\alpha)$ is the cyclotomic character \cite{Jordan86}.

\begin{comment}
There is a commutative diagram
$$\begin{tikzcd}
& G_K \arrow{rd}{\overline{\rho}_{\lambda_\l}} \arrow{r}{\rho_\l} \arrow{d}{\overline{\rho}_{\l}}
&  \arrow{d}{\Pi}  \text{GL}_2(R_L) \arrow{r}{r} 
&  \text{GL}_2(\Z_\l) \arrow{d}\\ 
& GL_2(\F_{\l^2}) \arrow{r}
& \text{GL}_2(\F_\l) \arrow{r}{\text{Det}}
& \F_\l^\times
\end{tikzcd}$$
\end{comment}

\begin{thm}\label{mod ses}
	Let $A$ be a geometrically simple abelian surface defined over an imaginary quadratic field $K$ such that $\text{End}_K(A)$ is isomorphic to a maximal order $\O$ in an indefinite quaternion algebra $B/\Q$. Suppose that the prime $\ell$ divides $\text{Disc}(B)$ and $\overline{\tau}_{\ell},\overline{\rho}_{\ell}$ are the residual Galois representations on the torsion subgroups  $A[\ell]$ and $A[\lambda_\ell]$ respectively. Then there is a short exact sequence of groups
	$$1 \longrightarrow \overline{\epsilon} \longrightarrow  \text{Im}(\overline{\tau}_{\ell}) \longrightarrow \text{Im}(\overline{\rho}_{\ell})  \longrightarrow 1,$$
	where $\overline{\epsilon} \leq \F_{\ell^2}^+.$
\end{thm}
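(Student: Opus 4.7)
The plan is to realize the desired exact sequence as the restriction to Galois images of the natural short exact sequence of ambient automorphism groups
$$1 \longrightarrow \F_{\ell^2}^+ \longrightarrow (\O/\ell)^\times \longrightarrow (\O/\lambda_\ell)^\times \longrightarrow 1$$
induced by the ring quotient $\O/\ell \twoheadrightarrow \O/\lambda_\ell$.

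First I would identify $\text{Aut}_\O(A[\ell]) \simeq (\O/\ell)^\times$ and $\text{Aut}_\O(A[\lambda_\ell]) \simeq (\O/\lambda_\ell)^\times \simeq \F_{\ell^2}^\times$, both following from the rank-one freeness recalled above from \cite[\S4]{Jordan86}. Because $\lambda_\ell$ is a two-sided ideal of $\O$, the subgroup $A[\lambda_\ell] \subseteq A[\ell]$ is preserved by every $\O$-linear endomorphism, so there is a restriction homomorphism $r: \text{Aut}_\O(A[\ell]) \to \text{Aut}_\O(A[\lambda_\ell])$ which under the above identifications is precisely the map on units induced by $\O/\ell \twoheadrightarrow \O/\lambda_\ell$. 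Compatibility with the Galois actions is immediate from the definitions, namely $r \circ \overline{\tau}_\ell = \overline{\rho}_\ell$, so restricting to images yields a surjection $\text{Im}(\overline{\tau}_\ell) \twoheadrightarrow \text{Im}(\overline{\rho}_\ell)$ with kernel $\overline{\epsilon} \le \ker(r)$.

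Next I would compute $\ker(r)$ using the explicit matrix description $(1)$. Reducing modulo $\ell$ (that is, modulo $\pi$) and using that $R_L/\ell R_L \simeq \F_{\ell^2}$ with the involution $'$ reducing to Frobenius $\alpha \mapsto \alpha^\ell$, one recovers the shape $\O/\ell \simeq \bigl\{ \bigl(\begin{smallmatrix} \alpha & \beta \\ 0 & \alpha^\ell \end{smallmatrix}\bigr) : \alpha, \beta \in \F_{\ell^2} \bigr\}$ already recorded in the text, and the quotient $\O/\ell \to \O/\lambda_\ell \simeq \F_{\ell^2}$ is the map taking upper-left entry $\alpha$. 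On units, the kernel is therefore
$$\ker(r) = \Bigl\{ \bigl(\begin{smallmatrix} 1 & \beta \\ 0 & 1 \end{smallmatrix}\bigr) : \beta \in \F_{\ell^2} \Bigr\},$$
and a direct multiplication shows this group is isomorphic to $(\F_{\ell^2}, +)$, completing the proof.

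The one substantive point is the final multiplicative-to-additive identification: it requires $\bar{j}^2 = 0$ in $\O/\ell$ so that the cross term in $(1 + \bar{j}\beta)(1 + \bar{j}\beta') = 1 + \bar{j}(\beta + \beta')$ vanishes, which in turn uses $j^2 = \pi \equiv 0 \pmod{\ell}$ together with the quaternion relation $j\alpha = \alpha^\sigma j$. Everything else is formal from the matrix model $(1)$.
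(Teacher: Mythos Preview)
Your proof is correct and follows essentially the same approach as the paper: both reduce to the ambient short exact sequence $1 \to \F_{\ell^2}^+ \to (\O/\ell)^\times \to (\O/\lambda_\ell)^\times \to 1$ and then restrict to Galois images. The only cosmetic difference is that the paper identifies $\ker(r)$ abstractly as $(1+\lambda_\ell)/(1+\ell) \simeq \F_{\ell^2}^+$, whereas you compute it explicitly via the matrix model $(1)$ as the unipotent subgroup $\bigl\{ \bigl(\begin{smallmatrix} 1 & \beta \\ 0 & 1 \end{smallmatrix}\bigr) : \beta \in \F_{\ell^2} \bigr\}$; these are the same computation in different notation.
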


	\begin{proof}
	Since  Aut$_\O (A[\l]) \simeq (\O/\l)^\times$ and Aut$_\O (A[\lambda_\l])  \simeq (\O/\lambda_\l)^\times \simeq \F_{\l^2}^\times$ it is enough to show that there is a short exact sequence
	$$1 \longrightarrow \F_{\l^2}^+ \longrightarrow  (\O/\l)^\times \longrightarrow  \F_{\l^2}^\times  \longrightarrow 1.$$
	
	Let $r$ be the projection $r:  (\O/\l)^\times \rightarrow (\O/\lambda_\l)^\times.$ Then $\text{ker}(r)$ consists of the cosets $\phi + \l$ such that $\phi \in \lambda_\l+1$. It follows that ${\rm ker}(r) \simeq (1+\lambda_\l)/(1+\l)$ which is isomorphic to $\F_{\l^2}^+.$  	  \qed
\end{proof}

\begin{rem}
	If $f$ is a Bianchi newform which corresponds to a QM surface with quaternion algebra $B_D$, then the residual representation attached to $f$ has cyclic image at the primes dividing the discriminant $D$.
\end{rem}

Given a Bianchi newform with rational coefficients, it would be desirable to have a criterion which determines whether $f$ should correspond to an elliptic curve or a QM surface. The above gives a necessary condition for $f$ to correspond to a QM surface. We wish to know whether a sufficient condition also exists and if so whether it can be determined from computing the trace of Frobenius for a finite set of primes.

\section{Proof of modularity}\label{Faltings Serre}

Here we provide a proof that the second example in Theorem \ref{examples_thm} is modular using the Faltings-Serre-Livn\'{e} criterion, the other cases follow similarly. So let $C_2$ be the genus 2 curve
\begin{align*}C_2 : y^2=&x^6 + (-2\sqrt{-3} - 10)x^5 + (10\sqrt{-3} + 30)x^4 + (-8\sqrt{-3} - 32)x^3 \\
&+ (-4\sqrt{-3} + 16)x^2 +     (-16\sqrt{-3} - 12)x - 4\sqrt{-3} + 16.
\end{align*} 
and $A$ the Jacobian of $C_2$.  The surface $A=Jac(C_2)$ has conductor $\p_{13,1}^4 \cdot \p_{19,1}^4$ with norm $61009^2$ and $\O \hookrightarrow \text{End}_{\Q(\sqrt{-3})}(A)$ where $\O$ is the maximal order of the rational quaternion algebra of discriminant 10. The endomorphism algebra can be independently verified using the machinery developed in \cite{CMSV18}.

Let $f \in S_2(\Gamma_0(\p_{13,1}^2 \cdot \p_{19,1}^2))$ be the genuine Bianchi newform which is listed on the LMFDB database with label
\href{http://lmfdb.org/ModularForm/GL2/ImaginaryQuadratic/2.0.3.1/61009.1/a/}{2.0.3.1-61009.1-a.} We will show that $f$ is modular to $A$. By the work of \cite{HST93,Taylor94} and more recently \cite{BH07, Mok14}, we can associate an $\ell$-adic Galois representation $\rho_{f,\ell}:{\rm Gal}(\overline{K}/K) \rightarrow {\rm GL}_2(\overline{\Q}_\l)$ to $f$ such that $L(f,s) = L(\rho_{f,\ell},s)$. 

The Faltings-Serre-Livn\'{e} method gives an effective way to prove that two Galois representations are isomorphic up to semisimplification by showing that the trace of Frobenius agree on a finite computable set of primes. We follow the steps outlined in \cite{DGP10} which for practical reasons necessitates use of the prime $\ell=2$. This prime is ramified in the acting quaternion algebra and as in \S\ref{rep section} we can associate a representation to the 2-adic Tate module.

\begin{lem} The representations
	$$\rho_{A,2}, \rho_{f,2} : \text{Gal}(\overline{\Q(\sqrt{-3})} / \Q(\sqrt{-3}) ) \longrightarrow \text{GL}_2(\overline{\Q}_2)$$
	have image contained in GL$_2(E)$, where $E$ is the unique unramified quadratic extension of $\Z_2$.
\end{lem}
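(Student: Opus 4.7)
The plan is to handle the two representations separately, since different inputs are available for each.

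For $\rho_{A,2}$, I would argue directly from the analysis of \S\ref{rep section}. Because $2$ divides $\text{Disc}(B)=10$, the completion $B_2 = B \otimes_\Q \Q_2$ is the unique division quaternion algebra over $\Q_2$, and the explicit embedding~(1) realizes $B_2 \hookrightarrow M_2(L)$ with $L$ the unramified quadratic extension of $\Q_2$. Restricting to the maximal order gives $\O_2 \hookrightarrow M_2(R_L) = M_2(E)$. Since the $2$-adic Tate module $T_2 A$ is free of rank one over $\O_2$, the associated representation factors through $\text{Aut}_\O(T_2 A) \simeq \O_2^\times$, so $\text{Im}(\rho_{A,2}) \subseteq \text{GL}_2(E)$.

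For $\rho_{f,2}$, the key input is that $f$ has rational Hecke eigenvalues, so $\text{tr}\, \rho_{f,2}(\text{Frob}_\p) = a_\p \in \Z \subset \Q_2$ for every prime $\p$ of good reduction, and the determinant is the cyclotomic character, which is also $\Q_2$-valued. By Chebotarev density and continuity, the characteristic polynomial of every $\rho_{f,2}(g)$ has coefficients in $\Q_2$. It is standard that a continuous absolutely irreducible two-dimensional representation whose traces lie in a local field $F$ admits a model over $F$ up to an obstruction class in $\text{Br}(F)[2]$; when the class is nontrivial the representation is defined over any quadratic extension of $F$ that splits it. Since every quadratic extension of $\Q_2$ splits the unique division quaternion algebra over $\Q_2$, in either case $\rho_{f,2}$ can be realized over $L$, and choosing a $G_K$-stable lattice produces image in $\text{GL}_2(E)$.

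The main obstacle is the $\rho_{f,2}$ side, specifically certifying that $L$ may serve as the field of definition rather than some ramified quadratic extension of $\Q_2$. The universal splitting property above resolves this in principle, but making the choice of integral model fully rigorous requires invoking the automorphic-to-Galois construction for Bianchi newforms (\cite{HST93,Taylor94,BH07,Mok14}) to produce a Galois-stable $\O_L$-lattice. Alternatively, one can follow the template of \cite{DGP10}, where the analogous descent is carried out for rational Bianchi newforms precisely to realize $\rho_{f,\ell}$ in $\text{GL}_2(E)$ when $\ell=2$ is ramified in the conjectural quaternion algebra. Either route puts both representations on the same footing and allows the Faltings-Serre-Livn\'{e} comparison to proceed on a finite set of primes in the next step.
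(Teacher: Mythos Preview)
Your treatment of $\rho_{A,2}$ matches the paper's exactly: the paper simply says it is ``a direct consequence of the way that the representation has been defined,'' and you have written out what that means.

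For $\rho_{f,2}$ you take a genuinely different route. The paper does not invoke the abstract Brauer obstruction at all; instead it makes a direct computation. It observes that $31$ splits in $\Q(\sqrt{-3})$ with distinct Hecke eigenvalues at the two primes above it, adjoins the roots of the corresponding Hecke polynomials to obtain the explicit field $\Q(\sqrt{-43},\sqrt{-123})$, and then checks that completing this field at any prime above $2$ yields the unramified quadratic extension of $\Q_2$. The citation to \cite[Corollary 1]{Taylor94} is what guarantees that this field of Hecke roots serves as a coefficient field for $\rho_{f,2}$. Your argument is cleaner in that it avoids any computation and explains \emph{why} the unramified extension always suffices (since $\text{Br}(\Q_2)[2]$ has a unique nontrivial class and every quadratic extension kills it); the paper's argument is more concrete and ties the coefficient field directly to the arithmetic of $f$, which is useful in an explicit paper of this kind. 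Your concern about producing a Galois-stable $\O_L$-lattice is unnecessary: once the representation lands in $\text{GL}_2(L)$, compactness of $G_K$ alone gives a stable lattice, with no further appeal to the automorphic construction. One small point you leave implicit is absolute irreducibility of $\rho_{f,2}$, which is needed for the descent-obstruction formalism; this holds because $f$ is cuspidal and non-CM, but it should be stated.
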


\begin{proof}
	For $\rho_{A,2}$ this is a direct consequence of the way that the representation has been defined. Let us now consider $\rho_{f,2}$. The prime 31 is split in $\Q(\sqrt{-3})$ and the Hecke eigenvalues above these primes are distinct. Hence we can take the field by adjoining the roots of the Hecke polynomials which gives $\Q(\sqrt{-43},\sqrt{-123})$. The completion at either of the primes above 2 in this field gives the unique unramified quadratic extension of $\Z_2$ and so we can take this as the coefficient field $E$ by \cite[Corollary 1]{Taylor94}. \qed
\end{proof}

First we must show that the residual representations are isomorphic.

\begin{lem}
	The residual representations $\overline{\rho}_{A,2}, \overline{\rho}_{f,2}$ are isomorphic and have image $C_3 \subset \text{GL}_2( \F_2).$ 
\end{lem}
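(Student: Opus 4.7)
The plan is to identify both residual images independently as the non-split Cartan $C_3 \subset \text{GL}_2(\F_2)$ and then match the two cyclic cubic characters they define.

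For $\overline{\rho}_{A,2}$, since $\ell=2$ divides $\text{Disc}(B_{10})=10$, Theorem \ref{mod ses} forces the image into $\text{Aut}_\O(A[\lambda_2]) \simeq \F_4^\times$, which after conjugation is the non-split Cartan $C_3 \subset \text{GL}_2(\F_2)$. To confirm the image is all of $C_3$, I would count points on the reduction of $C_2$ modulo a small prime $\p$ of $K$ of good reduction and exhibit one odd value of $a_\p(A)$, forcing an order-$3$ Frobenius.

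For $\overline{\rho}_{f,2}$, the previous lemma places the image in $\text{GL}_2(E)$ with $E/\Q_2$ unramified quadratic, so the residual image lies a priori in $\text{GL}_2(\F_4)$. Since $f$ has rational Hecke eigenvalues, all Frobenius traces reduce into $\F_2$, and by a Brauer--Nesbitt / characteristic polynomial argument the semisimple residual representation descends to $\text{GL}_2(\F_2) \simeq S_3$. I would then read $a_\p(f) \pmod 2$ off the LMFDB data at a short list of small primes and check that these match $a_\p(A) \pmod 2$, with at least one odd value certifying that the image contains $C_3$.

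The crux is ruling out the possibility that the image of $\overline{\rho}_{f,2}$ is the full $S_3$, because Frobenius traces mod $2$ cannot separate the identity from an involution. My approach would be the following: an $S_3$-image would cut out an $S_3$-extension of $K$ whose unique quadratic subextension $K'/K$ is unramified outside $\{2, \p_{13,1}, \p_{19,1}\}$. Class field theory over $K$ enumerates the finite list of such quadratic extensions, and for each I would produce a test prime whose Frobenius splitting behavior in $K'/K$ contradicts the mod-$2$ trace data of $f$, thereby ruling this case out. This is the step I expect to be the main obstacle.

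Once both images are pinned to $C_3$, each representation factors through a cyclic cubic extension of $K$ with ramification supported on $\{2, \p_{13,1}, \p_{19,1}\}$; there are again only finitely many such extensions by ray class group computation. A handful of matching congruences $a_\p(f) \equiv a_\p(A) \pmod 2$ at small split primes then identifies the two cubic extensions and forces the two residual representations to be isomorphic, giving the claim.
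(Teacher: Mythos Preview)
Your proposal is correct and follows essentially the same route as the paper: bound the residual images structurally (cyclic for $A$ because $2\mid\text{Disc}(B)$, and inside $\text{GL}_2(\F_2)\simeq S_3$ for $f$ by rationality of traces), then use ray class group computations over $K$ to enumerate the finitely many candidate quadratic and cubic characters unramified outside $\{2,\p_{13,1},\p_{19,1}\}$, and separate them using mod-$2$ trace data at a short list of test primes. The only variation worth noting is that the paper nails down $\text{Im}(\overline{\rho}_{A,2})=C_3$ by first computing $\text{Gal}(K(A[2])/K)\simeq A_4$ and feeding this into the short exact sequence of Theorem~\ref{mod ses}, whereas you propose to certify nontriviality directly by exhibiting a single odd $a_\p(A)$; both arguments are valid and equally short.
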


\begin{proof}
	Denote by $F_A$ and $F_f$ the fields cut out by  $\overline{\rho}_{A,2}$ and  $\overline{\rho}_{f,2}$ respectively. The field given by the 2-torsion of $A$ is isomorphic to $A_4$ which has only one proper normal subgroup. This subgroup has order 4 and so applying the short exact sequence of Theorem \ref{mod ses}, the image of $\overline{\rho}_{A,2}$ must be $C_3$. 
	
	We first note that it can be assumed Im$(\overline{\rho}_{f,2}) \subset GL_2(\F_2)$ due to the fact that the traces of Frobenius are all rational \cite[Lemma 3.1]{Jones16}. To show that Im$(\overline{\rho}_{f,2})=C_3$ let $\m$ denote the modulus
	$$\m = \p_2^3 \cdot \p_{13,1} \cdot \p_{19,1}.$$
	Then as explained in \cite[Ch. 6]{Jones15}, if Im$(\overline{\rho}_{f,2})$ is not equal to $C_3$ there must be a quadratic extension of $\Q(\sqrt{-3})$ contained in $F_f$ which corresponds to a quadratic character of Cl$(\Z[\frac{1+\sqrt{-3}}{2}], \m)$.
	We compute the ray class group to be
	$$\text{Cl}(\Z[\frac{1+\sqrt{-3}}{2}], \m) \simeq (\Z/2\Z)^2 \oplus (\Z/12\Z) \oplus (\Z/36\Z).$$
	
	Let us choose $\{\chi_1, \dots, \chi_4 \}$ as an $\F_2$-basis for the quadratic characters of Cl$(\Z[\frac{1+\sqrt{-3}}{2}], \m)$. Then  $\{\chi_1(\p), \dots, \chi_4(\p) \}_{\p \in S}$ spans $\F_2^4$, where $S=\{\p_{7,1}, \p_{7,2}, \p_{13,2}, \p_{19,2}, \p_5 \}$. If $F_f$ contains a quadratic subfield then by \cite[Proposition 5.4]{DGP10} the associated quadratic character must be non-zero for one of the primes in $S$. Hence there must be a prime $\p \in S$ that is inert in this subfield and so $\overline{\rho}_{f,2}(\text{Frob}_\p)$ must have order 2. However, we compute that the trace of Frobenius is odd for all primes in $S$ and therefore $F_f$ is a cubic extension of $\Q(\sqrt{-3})$.
	
	To show that the representations are isomorphic let $\psi_A$ denote the cubic character associated to $F_A$. Extend this to an $\F_3$-basis $\{\psi_A, \chi_1 \}$ of the cubic characters of Cl$(\Z[\frac{1+\sqrt{-3}}{2}], \m)$. We find that the prime $\p_{37,1}$ is such that $\psi_A(\p_{37,1}) = 0$ and $\chi_1(\p_{37,1}) \neq 0$. So if $\chi_f$ is the cubic character associated to $F_f$ and $\chi_f$ is not in the span of $\chi_A$ then  $\psi_f(\p_{37,1})$ must be non-zero. In particular, $\overline{\rho}_{f,2}(\text{Frob}_\p)$ must have order 3 but we find that Tr($\overline{\rho}_{f,2}(\text{Frob}_{\p_{37,1}})) = \text{Tr}(\overline{\rho}_{A,2}(\text{Frob}_{\p_{37,1}}))$ and so we can conclude that the residual representations are isomorphic. \qed
\end{proof}

Now that we have shown that the residual representations are isomorphic it remains to show that the full representations are isomorphic up to semisimplifcation. The residual images are cyclic and note that this will always be the case when the prime $\ell$ divides the discriminant of the acting quaternion algebra. Since the images are cyclic we can use Livn\'{e}'s criterion, which applies when the image is absolutely reducible.

\begin{thm}\label{thm:Livne}
	Let $K$ be a number field, $E$ a finite extension of $\Q_2$ and $\O_E$ its ring of integers with maximal ideal $\mathcal{P}$. Let
	$$\rho_1, \rho_2 : \text{Gal}(\overline{K}/K) \longrightarrow GL_2(E)$$
	be two continous representations unramified outside of a finite set of primes $S$ and $K_{2,S}$ the compositum of all quadratic extensions of $K$ unramified outside of $S$.
	
	Suppose that
	\begin{enumerate}
		\item $\text{Tr} (\rho_1) \equiv Tr(\rho_2) \equiv 0 \ (\text{mod } \mathcal{P})$ and $\text{Det}(\rho_1) \equiv \text{Det}(\rho_2) \equiv 1 \ (\text{mod } \mathcal{P})$;
		\item There is a finite set of primes $T$ such that the characteristic polynomials of $\rho_1$ and $\rho_2$ are equal on the set $\{ \text{Frob}_\p \ | \ \p \in T\}$. 
	\end{enumerate}
	Then $\rho_1$ and $\rho_2$ have isomorphic semisimplifications.
\end{thm}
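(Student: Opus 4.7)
The plan is to carry out Livn\'e's original inductive strategy: show that $\mathrm{tr}(\rho_1(g))=\mathrm{tr}(\rho_2(g))$ for every $g$ by lifting the agreement of traces modulo $\mathcal{P}^n$ one power at a time, and then conclude $\rho_1^{ss}\cong\rho_2^{ss}$ by Brauer-Nesbitt. After a simultaneous conjugation both $\rho_i$ may be assumed to take values in $GL_2(\O_E)$, and by Chebotarev density it is enough to verify the trace equality on Frobenius elements unramified outside $S$.

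First I would extract the residual picture. The hypotheses $\mathrm{tr}(\rho_i)\equiv 0$ and $\det(\rho_i)\equiv 1\pmod{\mathcal{P}}$ force every element in the image of $\bar\rho_i$ in $GL_2(\F_2)$ to have characteristic polynomial $(X+1)^2$, so $\bar\rho_i$ has image in a cyclic subgroup of order dividing $3$. Let $L$ be the fixed field of $\ker\bar\rho_1\cap\ker\bar\rho_2$; it is unramified outside $S$ with $[L:K]$ dividing $9$, and on $G_L$ both representations become residually trivial, so their images land in the pro-$2$ congruence subgroup $\Gamma:=I+\mathcal{P}\,M_2(\O_E)$.

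The heart of the proof is the inductive step. Assuming $\rho_1\equiv\rho_2\pmod{\mathcal{P}^n}$ on $G_L$, the correction $g\mapsto \rho_1(g)\rho_2(g)^{-1}-I$ reduces modulo $\mathcal{P}^{n+1}$ to a $1$-cocycle $c_n:G_L\to M_2(\F_2)$ for the trivial action, whose trace captures precisely the obstruction to lifting the congruence of traces one step further. Since $c_n$ is killed by $2$ and unramified outside $S$, Kummer theory shows that its trace character factors through $\mathrm{Gal}(L_{2,S}/L)$, and hence pulls back from $\mathrm{Gal}(K_{2,S}/K)$. The hypothesis on $T$ is exactly that the Frobenius classes $\{\mathrm{Frob}_\mathfrak{p}\}_{\mathfrak{p}\in T}$ form a \emph{non-cubic} subset whose classes detect every nontrivial element in this quotient; the equality of characteristic polynomials on $T$ then kills the cocycle, the congruence lifts, and the induction goes through. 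Passing to the $\mathcal{P}$-adic limit gives the trace identity on $G_L$, and averaging over the finite group $\mathrm{Gal}(L/K)$ extends it to $G_K$.

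The main obstacle is the cohomological bookkeeping in the inductive step: one must verify that the trace of the obstruction cocycle really factors through a quotient controlled by $\mathrm{Gal}(K_{2,S}/K)$, and that the set $T$ genuinely surjects onto the classes on which a non-trivial character could be non-zero --- this is where the implicit non-cubic combinatorial condition on the Frobenius classes of $T$ enters. Once that is set up correctly, the rest of the argument is a mechanical combination of Chebotarev, Brauer-Nesbitt, and $\mathcal{P}$-adic continuity.
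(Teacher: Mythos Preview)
The paper does not actually prove this theorem: its entire proof is the citation ``See \cite[Theorem 4.3]{Livne87}.'' So there is no argument in the paper to compare your sketch against; you are reconstructing Livn\'e's original proof, and the overall inductive strategy (lift trace congruences one power of $\mathcal{P}$ at a time, control the obstruction cocycle by quadratic characters unramified outside $S$, use the non-cubic condition on $T$, then invoke Brauer--Nesbitt) is indeed the right one.

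That said, your residual analysis contains a genuine error. The hypotheses $\mathrm{tr}\equiv 0$, $\det\equiv 1\pmod{\mathcal{P}}$ force every element of $\bar\rho_i(G_K)$ to have characteristic polynomial $X^2+1=(X+1)^2$ over the residue field, i.e.\ to be \emph{unipotent}. In characteristic $2$ such elements have order dividing $2$, not $3$; the image of $\bar\rho_i$ is therefore a $2$-group, and the field $L$ you introduce has $[L:K]$ a power of $2$, not a divisor of $9$. You appear to have imported the $C_3$ from the preceding lemma in the paper, but that $C_3$ is the residual image \emph{before} one restricts to the cubic extension, and it is precisely because elements of order $3$ in $GL_2(\F_2)$ have trace $1\not\equiv 0$ that the paper must first pass to that extension before invoking Livn\'e. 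Once you correct this (or simply note that the semisimplified residual representations are trivial, which is what Livn\'e actually uses), the rest of your outline goes through. The ``averaging over $\mathrm{Gal}(L/K)$'' at the end is also unnecessary: Livn\'e's argument works directly on $G_K$ once the residual semisimplifications are trivial.
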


\begin{proof}
	See \cite[Theorem 4.3]{Livne87}. \qed
\end{proof}

It is now possible to show that the representations attached to $A$ and $f$ are isomorphic up to semisimplification.

\begin{thm}\label{examples_thm} The Jacobians of the following genus 2 curves are QM surfaces which are modular by a genuine Bianchi newform as in Conjecture \ref{modularity}. 
	\begin{enumerate}
		\item $C_1: y^2= x^6 + 4ix^5 + (-2i - 6)x^4 + (-i + 7)x^3 + (8i - 9)x^2 - 10ix + 4i + 3, \\  \text{Bianchi newform: }$  \href{http://www.lmfdb.org/ModularForm/GL2/ImaginaryQuadratic/2.0.4.1/34225.3/a/}{2.0.4.1-34225.3-a;} 
		\item $C_2 : y^2=x^6 + (-2\sqrt{-3} - 10)x^5 + (10\sqrt{-3} + 30)x^4 + (-8\sqrt{-3} - 32)x^3 
		\\+ (-4\sqrt{-3} + 16)x^2 +     (-16\sqrt{-3} - 12)x - 4\sqrt{-3} + 16, \\  \text{Bianchi newform: }$ \href{http://lmfdb.org/ModularForm/GL2/ImaginaryQuadratic/2.0.3.1/61009.1/a/}{2.0.3.1-61009.1-a;}
		\item $C_3: y^2=(104\sqrt{-3} - 75)x^6 + (528\sqrt{-3} + 456)x^4 + (500\sqrt{-3} + 1044)x^3\\ + (-1038\sqrt{-3} + 2706)x^2 + (-1158\sqrt{-3} + 342)x - 612\sqrt{-3} - 1800, \\  \text{Bianchi newform: }$  \href{http://www.lmfdb.org/ModularForm/GL2/ImaginaryQuadratic/2.0.3.1/67081.3/a/}{2.0.3.1-67081.3-a;} 
		\item $C_4 : y^2 = x^6 - 2\sqrt{-3}x^5 + (2\sqrt{-3} - 3)x^4 + 1/3(-2\sqrt{-3} + 54)x^3\\ + (-20\sqrt{-3} + 3)x^2 + (-8\sqrt{-3} - 30)x + 4\sqrt{-3} - 11, \\  \text{Bianchi newform: }$ \href{http://www.lmfdb.org/ModularForm/GL2/ImaginaryQuadratic/2.0.3.1/123201.1/b/}{2.0.3.1-123201.1-b.}
	\end{enumerate}
\end{thm}

\begin{proof}
	Restricting the representations to the cubic extension cut out by the residual representation, the mod $\mathcal{P}$ image becomes trivial. We are then in a position to apply Livn\'{e}'s criterion.
	
	Let $\{ \chi_1, \dots, \chi_6\}$ be a basis of quadratic characters. Any set of primes  $\{ \p_i \}$ for which the vectors $\{( \chi_1(\p_i), \dots, \chi_6(\p_i)) \}$ cover $\F_2^6 \backslash \{0\}$ will satisfy the criterion. Following \cite[\S2.3 step (7)]{DGP10} we compute the set \(T = \{ 3, 37, 43, 61, 67, 73, 97, 103, 127, 151, 157, 193, 211, 307, 313, 343, 373, 433, 463, 499, 523,\\ 631, 661, 823, 1321, 2197, 2557, 2917  \}.\) The traces of Frobenius agree on this set.
	
	 To complete the proof we note that this shows that the representations are isomorphic when restricting to the cubic extension cut out by the residual representations. As explained in \cite[pp. 362]{SW05} this means that the full representations could differ by a character. We find that the prime above 5 is inert in the cubic extension and that the traces of Frobenius agree on this prime, which forces the character to be trivial. Hence we can conclude that the two representations are isomorphic up to semisimplification. 
	 
	The set of primes needed for the other three surfaces are:
	\begin{itemize}
	\item $T(C_1) = \{ 5, 17, 61, 73, 121, 125, 157  \}.$
	\item $T(C_3)= \{  3, 13, 19, 31, 43, 73, 79, 103, 157, 163, 181, 199, 307, 313, 397, 409, 457,\\ 487, 643, 661, 673, 691, 823, 829, 997, 1063, 1447, 1621, 2377, 2689 \}. $
	\item $T(C_4)= \{ 7, 13, 61, 79, 97 \}.$
	\end{itemize} \qed
\end{proof}

\section{Examples}\label{examples section}

At the time of writing there are 161343 rational Bianchi newforms of weight 2 in the LMFDB  \cite{lmfdb} and these are for the quadratic fields $\Q(\sqrt{-d})$ with $d=1,2,3,7,11$. Up to conjugation and twist there are only four genuine newforms for which no corresponding elliptic curve has been found. These are all accounted for by Theorem \ref{examples_thm}.

\begin{curve}\label{C1}
	Let $C_1$ be the genus 2 curve as in Theorem \ref{examples_thm}.1:
	\begin{align*} C_1: y^2= x^6 + 4ix^5 + (-2i - 6)x^4 + (-i + 7)x^3 + (8i - 9)x^2 - 10ix + 4i + 3.
	\end{align*}
	\begin{itemize}
		\item The surface $A=Jac(C_1)$ has conductor $\p_{5,1}^4 \cdot \p_{37,2}^4$ with norm $34225^2$.
		\item  $\O \hookrightarrow \text{End}_{\Q(i)}(A)$ where $\O$ is the maximal order of the rational quaternion algebra of discriminant 6.
		\item There is a genuine Bianchi newform $f \in S_2(\Gamma_0(\p_{5,1}^2 \cdot \p_{37,2}^2))$ which is modular to $A$ and is listed on the LMFDB database with label
		\href{http://www.lmfdb.org/ModularForm/GL2/ImaginaryQuadratic/2.0.4.1/34225.3/a/}{2.0.4.1-34225.3-a.}
	\end{itemize}

\begin{comment}	
	We list some of the Hecke eigenvalues here:\\
	\footnotesize{\textnormal{\begin{tabu}{|c|c|c|c|c|c|c|c|c|c|c|c|c|c|c|c|c|c|c|c|c|}  \hline 
				\text{Prime ideal} $\p$  &   $\mathfrak{p}_2$ & $\mathfrak{p}_{5,1}$ & $\mathfrak{p}_{5,2}$ & $\mathfrak{p}_{3}$  & $\mathfrak{p}_{13,1}$ & $\mathfrak{p}_{13,2}$ & $\mathfrak{p}_{17,1}$ & $\mathfrak{p}_{17,2}$  & $\mathfrak{p}_{29,1}$ & $\mathfrak{p}_{29,2}$ & $\mathfrak{p}_{37,1}$ & $\mathfrak{p}_{37,2}$ & $\mathfrak{p}_{41,1}$ & $\mathfrak{p}_{41,2}$   & $\mathfrak{p}_{7}$ \\ \hline 
				$a_{\p}(f)$ & -2 & 0 & -1 & -3 & -1 & -3 & -4 & -5 & 5 & -1 & -9 & 0 & -5 & -8 & -9 \\ \hline
	\end{tabu}}}\\
	\end{comment}
\end{curve}

\begin{curve}\label{C2}
	Let $C_2$ be the genus 2 curve as in Theorem \ref{examples_thm}.2:
	\begin{align*}C_2 : y^2=&x^6 + (-2\sqrt{-3} - 10)x^5 + (10\sqrt{-3} + 30)x^4 + (-8\sqrt{-3} - 32)x^3 \\
	&+ (-4\sqrt{-3} + 16)x^2 +     (-16\sqrt{-3} - 12)x - 4\sqrt{-3} + 16.
	\end{align*}
	\begin{itemize}
		\item The surface $A=Jac(C_2)$ has conductor $\p_{13,1}^4 \cdot \p_{19,1}^4$ with norm $61009^2$.
		\item  $\O \hookrightarrow \text{End}_{\Q(\sqrt{-3})}(A)$ where $\O$ is the maximal order of the rational quaternion algebra of discriminant 10.
		\item There is a genuine Bianchi newform $f \in S_2(\Gamma_0(\p_{13,1}^2 \cdot \p_{19,1}^2))$ which is modular to $A$ and is listed on the LMFDB database with label
		\href{http://lmfdb.org/ModularForm/GL2/ImaginaryQuadratic/2.0.3.1/61009.1/a/}{2.0.3.1-61009.1-a.}
	\end{itemize}

\begin{comment}	
	We list some of the Hecke eigenvalues here:\\
	\footnotesize{\textnormal{\begin{tabu}{|c|c|c|c|c|c|c|c|c|c|c|c|c|c|c|c|c|c|c|c|c|}  \hline 
				\text{Prime ideal} $\p$  & $\mathfrak{p}_{3}$ &  $\mathfrak{p}_2$ & $\mathfrak{p}_{7,1}$ & $\mathfrak{p}_{7,2}$  & $\mathfrak{p}_{13,1}$ & $\mathfrak{p}_{13,2}$ & $\mathfrak{p}_{19,1}$ & $\mathfrak{p}_{19,2}$ & $\mathfrak{p}_{5}$ & $\mathfrak{p}_{31,1}$ & $\mathfrak{p}_{31,2}$ & $\mathfrak{p}_{37,1}$ & $\mathfrak{p}_{37,2}$ & $\mathfrak{p}_{43,1}$ & $\mathfrak{p}_{43,2}$  \\ \hline 
				$a_{\p}(f)$ & -2 & -2 & -1 & -1 & 0 & -5 & 0 & -3 & -5 & -1 & 9 & -4 & -5 & -2 & 3 \\ \hline
	\end{tabu}}}\\
	\end{comment}
\end{curve}

\begin{curve}\label{C3}
	Let $C_3$ be the genus 2 curve as in Theorem \ref{examples_thm}.3:
	\begin{align*}C_3: y^2=&(104\sqrt{-3} - 75)x^6 + (528\sqrt{-3} + 456)x^4 + (500\sqrt{-3} + 1044)x^3\\ &+ (-1038\sqrt{-3} + 2706)x^2
	+ (-1158\sqrt{-3} + 342)x - 612\sqrt{-3} - 1800.
	\end{align*}
	\begin{itemize}
		\item The surface $A=Jac(C_3)$ has conductor $\p_{7,1}^4 \cdot \p_{37,2}^4$ with norm $67081^2$.
		\item  $\O \hookrightarrow \text{End}_{\Q(\sqrt{-3})}(A)$ where $\O$ is the maximal order of the rational quaternion algebra of discriminant 10.
		\item There is a genuine Bianchi newform $f \in S_2(\Gamma_0(\p_{7,1}^2 \cdot \p_{37,2}^2))$  which is modular to $A$ and is listed on the LMFDB database with label
		\href{http://www.lmfdb.org/ModularForm/GL2/ImaginaryQuadratic/2.0.3.1/67081.3/a/}{2.0.3.1-67081.3-a.} 
	\end{itemize}

\begin{comment}	
	We list some of the Hecke eigenvalues here:\\
	\footnotesize{\textnormal{\begin{tabu}{|c|c|c|c|c|c|c|c|c|c|c|c|c|c|c|c|c|c|c|c|c|}  \hline 
				\text{Prime ideal} $\p$  & $\mathfrak{p}_{3}$ &  $\mathfrak{p}_2$ & $\mathfrak{p}_{7,1}$ & $\mathfrak{p}_{7,2}$  & $\mathfrak{p}_{13,1}$ & $\mathfrak{p}_{13,2}$ & $\mathfrak{p}_{19,1}$ & $\mathfrak{p}_{19,2}$ & $\mathfrak{p}_{5}$ & $\mathfrak{p}_{31,1}$ & $\mathfrak{p}_{31,2}$ & $\mathfrak{p}_{37,1}$ & $\mathfrak{p}_{37,2}$ & $\mathfrak{p}_{43,1}$ & $\mathfrak{p}_{43,2}$  \\ \hline 
				$a_{\p}(f)$ & -2 & -2 & 0 & 1 & -3 & 0 & -2 & 7 & 5 & 9 & -2 & 9 & 0 & -2 & 8 \\ \hline
	\end{tabu}}}\\
	\end{comment}
\end{curve}

\begin{curve}\label{C4}
	Let $C_4$ be the genus 2 curve as in Theorem \ref{examples_thm}.4:
	\begin{align*}C_4 : y^2 = &x^6 - 2\sqrt{-3}x^5 + (2\sqrt{-3} - 3)x^4 + 1/3(-2\sqrt{-3} + 54)x^3\\
	&+ (-20\sqrt{-3} + 3)x^2 + (-8\sqrt{-3} - 30)x + 4\sqrt{-3} - 11.
	\end{align*}
	\begin{itemize}
		\item The surface $A=Jac(C_4)$ has conductor $\p_{3}^{12} \cdot \p_{13,1}^4$ with norm $123201^2$.
		\item  $\O \hookrightarrow \text{End}_{\Q(\sqrt{-3})}(A)$ where $\O$ is the maximal order of the rational quaternion algebra of discriminant 6.
		\item There is a genuine Bianchi newform $f \in S_2(\Gamma_0(\p_{3}^6 \cdot \p_{13,1}^2))$ which is modular to $A$ and is listed on the LMFDB database with label
		\href{http://www.lmfdb.org/ModularForm/GL2/ImaginaryQuadratic/2.0.3.1/123201.1/b/}{2.0.3.1-123201.1-b.}
	\end{itemize}
\begin{comment}	
	We list some of the Hecke eigenvalues here:\\
	\footnotesize{\textnormal{\begin{tabu}{|c|c|c|c|c|c|c|c|c|c|c|c|c|c|c|c|c|c|c|c|c|}  \hline 
				\text{Prime ideal} $\p$  & $\mathfrak{p}_{3}$ &  $\mathfrak{p}_2$ & $\mathfrak{p}_{7,1}$ & $\mathfrak{p}_{7,2}$  & $\mathfrak{p}_{13,1}$ & $\mathfrak{p}_{13,2}$ & $\mathfrak{p}_{19,1}$ & $\mathfrak{p}_{19,2}$ & $\mathfrak{p}_{5}$ & $\mathfrak{p}_{31,1}$ & $\mathfrak{p}_{31,2}$ & $\mathfrak{p}_{37,1}$ & $\mathfrak{p}_{37,2}$ & $\mathfrak{p}_{43,1}$ & $\mathfrak{p}_{43,2}$  \\ \hline 
				$a_{\p}(f)$ & 0 & 0 & -2 & -3 & 0 & -3 & -3 & -5 & -9 & -3 & -1 & 5 & 3 & 7 & -1 \\ \hline
	\end{tabu}}}\\
	\end{comment}
\end{curve}

\textbf{Acknowledgements.}
 Many thanks to the referees for making useful suggestions and helping to improve the article. I am very grateful to both John Cremona and Aurel Page for kindly computing Bianchi newforms for me. I would like to thank Lassina Demb\'{e}l\'{e} and Jeroen Sijsling for sharing their code with me and theoretical discussions which have been of great help. Finally, it is a pleasure to thank my supervisor Haluk \c{S}eng\"{u}n for his suggestion of this interesting topic and great enthusiasm.

\bibliography{D:/my_work/TeXworks/References}

\end{document}